\newtheorem{theorem}{Theorem}[section]
\newtheorem{corollary}[theorem]{Corollary}
\theoremstyle{definition}
\newtheorem{remark}[theorem]{Remark}
\theoremstyle{parrafo}
\begin{document}

\title[]{Selfimprovemvent of the inequality between arithmetic and geometric means}

\author{J. M. Aldaz}
\address{PERMANENT ADDRESS: Departamento de Matem\'aticas y Computaci\'on,
Universidad  de La Rioja, 26004 Logro\~no, La Rioja, Spain.}
\email{jesus.munarrizaldaz@dmc.unirioja.es}

\address{CURRENT ADDRESS: Departamento de Matem\'aticas,
Universidad  Aut\'onoma de Madrid, Cantoblanco 28049, Madrid, Spain.}
\email{jesus.munarriz@uam.es}

\thanks{2000 {\em Mathematical Subject Classification.} 26D15}

\thanks{The author was partially supported by Grant MTM2006-13000-C03-03 of the
D.G.I. of Spain}









\maketitle


\markboth{J. M. Aldaz}{AM-GM}


It is well known that the AM-GM inequality
has selfimproving properties.  Let $x_i\ge 0$ for
$i=1,\dots, n$. The classical, equal weights case, states that
\begin{equation}\label{AMGc}
\prod_{i=1}^n x_i^{1/n}
\le
\frac{1}{n} \sum_{i=1}^n x_i.
\end{equation}
Let
$\alpha_i > 0$ satisfy $\sum_{i=1}^n \alpha_i = 1$.
Inequality (\ref{AMGc})  selfimproves  to the rational
weights case simply via repetition of terms, and to the case of real
 weights $\alpha_i$ just by taking limits. So the
general
AM-GM inequality
\begin{equation}\label{AMG}
\prod_{i=1}^n x_i^{\alpha_i}
\le
\sum_{i=1}^n \alpha_i x_i
\end{equation}
follows. There is a second way in which the AM-GM inequality selfimproves.
Let $s > 0$ and use the change of variables $x_i = y_i^s$. Substituting
in (\ref{AMG}) and taking $s$-th roots we get
\begin{equation}\label{AMGr}
\prod_{i=1}^n y_i^{\alpha_i}
\le
\left(\sum_{i=1}^n \alpha_i y_i^{s}\right)^{1/s}.
\end{equation}
Now for $0 < s < 1$, Jensen's inequality tells us that $\left(\sum_{i=1}^n \alpha_i y_i^{s}\right)^{1/s} \le \sum_{i=1}^n \alpha_i y_i$ since $t^s$ is concave, and furthermore the
inequality is strict unless $y_1=\dots = y_n$ (this follows from the equality case in Jensen's inequality). So (\ref{AMG}) automatically proves a
family of
better inequalities; it ``pulls itself by its bootstraps". The particular case $s=1/2$ immediately leads to a natural
and useful refinement of (\ref{AMG}).

\begin{theorem}\label{AMGMr}  For $i=1,\dots, n$, let $x_i\ge 0$,
 and let
$\alpha_i > 0$ satisfy $\sum_{i=1}^n \alpha_i = 1$. Then
\begin{equation}\label{refAMGM}
\prod_{i=1}^n x_i^{\alpha_i}
\le
\sum_{i=1}^n \alpha_i x_i -
\sum_{i=1}^n \alpha_i \left(x_i^{1/2}- \sum_{k=1}^n \alpha_k x_k^{1/2}\right)^2.
\end{equation}
\end{theorem}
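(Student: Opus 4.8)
The plan is to specialize the already-established family of inequalities (\ref{AMGr}) to the value $s=1/2$ and then to recognize that the right-hand side of (\ref{refAMGM}) is, after an elementary algebraic rearrangement, nothing other than the resulting bound. First I would set $s=1/2$ in (\ref{AMGr}) and rename $y_i$ as $x_i$; since $1/(1/2)=2$, this yields directly
\begin{equation}\label{sqbound}
\prod_{i=1}^n x_i^{\alpha_i} \le \left(\sum_{i=1}^n \alpha_i x_i^{1/2}\right)^2 .
\end{equation}
Thus the entire content of the theorem reduces to showing that the right-hand side of (\ref{refAMGM}) equals the square of the weighted mean of the square roots appearing in (\ref{sqbound}).

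To carry this out, I would introduce the shorthand $m := \sum_{k=1}^n \alpha_k x_k^{1/2}$ for that weighted mean and observe that the subtracted sum in (\ref{refAMGM}) is a weighted variance. Expanding the square and using the two normalizations $\sum_{i=1}^n \alpha_i = 1$ and $\sum_{i=1}^n \alpha_i x_i^{1/2} = m$, one obtains the standard identity
\begin{equation}\label{varid}
\sum_{i=1}^n \alpha_i \left(x_i^{1/2} - m\right)^2 = \sum_{i=1}^n \alpha_i x_i - m^2 .
\end{equation}
Substituting (\ref{varid}) into the right-hand side of (\ref{refAMGM}) makes the term $\sum_{i=1}^n \alpha_i x_i$ cancel, leaving exactly $m^2 = \left(\sum_{k=1}^n \alpha_k x_k^{1/2}\right)^2$. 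Combining this with (\ref{sqbound}) then gives (\ref{refAMGM}).

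There is no genuinely hard step here: the proof is essentially the observation that the correction term in (\ref{refAMGM}) is the weighted variance of the $x_i^{1/2}$, so that subtracting it from $\sum_{i=1}^n \alpha_i x_i$ (the weighted mean of the squares of the $x_i^{1/2}$) collapses the right-hand side to the square of the weighted mean, which is precisely the $s=1/2$ instance of the bootstrap inequality (\ref{AMGr}). The only point requiring any care is the verification of the variance identity (\ref{varid}), which is a routine expansion; everything else is a direct substitution. One could also note in passing that, by the strictness clause for Jensen's inequality recorded just before the theorem, the bound in (\ref{sqbound}), and hence (\ref{refAMGM}), is strict unless all the $x_i$ are equal.
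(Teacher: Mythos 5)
Your proof is correct and is essentially the paper's own argument: both apply (\ref{AMGr}) with $s=1/2$ and then use the variance identity $\operatorname{Var}(X)=E(X^2)-(E(X))^2$ to identify the resulting bound with the right-hand side of (\ref{refAMGM}). The only difference is cosmetic (you rearrange the identity before substituting, while the paper subtracts the geometric mean first), so nothing further is needed.
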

Note that the right most term of (\ref{refAMGM}) is the variance
$\operatorname{Var}(x^{1/2})$ of
the vector $x^{1/2} = (x_1^{1/2},\dots,x_n^{1/2})$ with respect to the probability $\sum_{i=1}^n \alpha_i \delta_{x_i}$. So a large variance
(of $x^{1/2}$) pushes the arithmetic and geometric means apart.

\begin{proof} Recalling that
$\operatorname{Var}(X) = E(X^2) - (E(X))^2 = E([X - E(X)]^2) $, and
using (\ref{AMGr}) with $s = 1/2$, we obtain
\begin{equation*}
\sum_{i=1}^n \alpha_i x_i -
\prod_{i=1}^n x_i^{\alpha_i}
\ge
\sum_{i=1}^n \alpha_i x_i -
\left(\sum_{k=1}^n \alpha_k x_k^{1/2}\right)^{2}
=
\sum_{i=1}^n \alpha_i \left(x_i^{1/2}- \sum_{k=1}^n \alpha_k x_k^{1/2}\right)^2 = \operatorname{Var}(x^{1/2}).
\end{equation*}
\end{proof}

This refinement of the AM-GM inequality leads to an improvement of H\"older's inequality for several functions.

\begin{corollary}\label{betterhold}  For $i = 1,\dots, n$, let $1 < p_i < \infty$
be such that $p_1^{-1} + \cdots + p_n^{-1} = 1$, and let $0\le f_i\in L^{p_i}$
satisfy  $\|f_i\|_{p_i}  > 0$.  Then
\begin{equation}\label{bonhold}
\left\|\prod_{i=1}^n f_i\right\|_1
\le
\prod_{i=1}^n\|f_i\|_{p_i} \left(1 - \sum_{i=1}^n \frac1{p_i}
\left\|\frac{f_i^{p_i/2}}{\|f_i\|_{p_i}^{p_i/2}} -
\sum_{k=1}^n \frac1{p_k} \frac{f_k^{p_k/2}}{\|f_k\|_{p_k}^{p_k/2}}\right\|_2^2\right).
\end{equation}
\end{corollary}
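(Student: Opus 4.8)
The plan is to run the classical reduction of H\"older's inequality to the pointwise AM-GM inequality, but to feed in the sharpened bound (\ref{refAMGM}) of Theorem \ref{AMGMr} in place of the plain inequality (\ref{AMG}). First I would normalize, setting $g_i := f_i/\|f_i\|_{p_i}$, so that $g_i \ge 0$ and $\|g_i\|_{p_i} = 1$; since $g_i^{p_i/2} = f_i^{p_i/2}/\|f_i\|_{p_i}^{p_i/2}$, dividing both sides of (\ref{bonhold}) by $\prod_{i=1}^n \|f_i\|_{p_i}$ shows that the corollary is equivalent to the normalized statement
\begin{equation*}
\left\|\prod_{i=1}^n g_i\right\|_1 \le 1 - \sum_{i=1}^n \frac1{p_i} \left\|g_i^{p_i/2} - \sum_{k=1}^n \frac1{p_k} g_k^{p_k/2}\right\|_2^2.
\end{equation*}

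Next, at each point $\omega$ of the underlying measure space I would apply Theorem \ref{AMGMr} with weights $\alpha_i = 1/p_i$ (which sum to $1$ by hypothesis) and values $x_i = g_i(\omega)^{p_i} \ge 0$. These choices are engineered so that $x_i^{\alpha_i} = x_i^{1/p_i} = g_i(\omega)$ and $x_i^{1/2} = g_i(\omega)^{p_i/2}$, so that (\ref{refAMGM}) becomes the pointwise estimate
\begin{equation*}
\prod_{i=1}^n g_i \le \sum_{i=1}^n \frac1{p_i} g_i^{p_i} - \sum_{i=1}^n \frac1{p_i}\left(g_i^{p_i/2} - \sum_{k=1}^n \frac1{p_k} g_k^{p_k/2}\right)^2 .
\end{equation*}

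I would then integrate this inequality over the measure space. The clean way to do this is to rewrite it as $\prod_i g_i + \sum_i \frac1{p_i}(\cdots)^2 \le \sum_i \frac1{p_i} g_i^{p_i}$: both summands on the left are nonnegative, so their integrals exist in $[0,\infty]$, while the right-hand side integrates to $\sum_i \frac1{p_i}\|g_i\|_{p_i}^{p_i} = \sum_i \frac1{p_i} = 1$. Monotonicity of the integral for nonnegative integrands then yields the displayed normalized inequality, after which undoing the normalization (multiplying through by $\prod_{i=1}^n \|f_i\|_{p_i}$) gives (\ref{bonhold}).

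The computation is short, and I expect no genuine analytic difficulty; the one point requiring care is the passage from the pointwise bound to the integrated one. Rather than splitting the right-hand side into pieces whose integrability is not immediately clear, I would keep the subtracted ``variance'' term on the left, where it is manifestly nonnegative, so that the integration is justified purely by monotonicity (or Tonelli) for nonnegative functions and no extra integrability hypotheses are needed. Getting this bookkeeping right, rather than any substantive estimate, is the main thing to watch.
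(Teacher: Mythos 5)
Your proposal is correct and follows exactly the paper's route: substitute $\alpha_i = p_i^{-1}$ and $x_i = f_i^{p_i}/\|f_i\|_{p_i}^{p_i}$ into (\ref{refAMGM}), integrate, and multiply by $\prod_{i=1}^n \|f_i\|_{p_i}$. The only difference is that you spell out the integration step (moving the variance term to the left so everything integrated is nonnegative), a justification the paper leaves implicit.
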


\begin{proof}  Set $\alpha_i = p_i^{-1}$ and $x_i = f_i^{p_i}(u)/\|f_i\|_{p_i}^{p_i}$ in (\ref{refAMGM}). To obtain (\ref{bonhold}), integrate
and multiply both sides by $\prod_{i=1}^n\|f_i\|_{p_i}$.
\end{proof}

\begin{remark} Inequality (\ref{refAMGM}) was suggested
by the following result of D. I. Cartwright and M. J. Field
(cf. \cite{CaFi}; cf.  also \cite{Alz} and \cite{Me} for additional
refinements along these lines). Let $0 < m =\min\{x_1,\dots, x_n\}$ and let $M=\max\{x_1,\dots, x_n\}$. Then
\begin{equation}\label{cafi}
\frac{1}{2M}  \sum_{i=1}^n \alpha_i \left(x_i- \sum_{k=1}^n \alpha_k x_k\right)^2
\le
\sum_{i=1}^n \alpha_i x_i - \prod_{i=1}^n x_i^{\alpha_i}
\le
\frac{1}{2m}  \sum_{i=1}^n \alpha_i \left(x_i- \sum_{k=1}^n \alpha_k x_k\right)^2.
\end{equation}
The motivation to search for variants of (\ref{cafi}) comes the fact that
it is not well suited to the
particular application considered here (refining
 H\"older's inequality).  One
would need to assume that $|f_i|\le M$ almost everywhere.
We give bounds using the variance of $x^{1/2}$ instead of the variance of $x$ in order to ensure the integrability of the functions involved,
and also to obtain the same homogeneity  on both sides of (\ref{refAMGM}).
\end{remark}

\begin{remark} The difference between the arithmetic and geometric
means is in general not comparable to $\operatorname{Var} (x^{1/2})$.
To see this, it is enough to consider the equal weights case, with
$n >>1$, $x_1 = 0$, and $x_2=\dots = x_n = 1$. Or the case where
$n = 2$, and one of the weights is much larger than the other.
But perhaps it is possible to give an upper bound
for $\sum_{i=1}^n \alpha_i x_i - \prod_{i=1}^n x_i^{\alpha_i}$ using $\operatorname{Var} (x^{1/2})$
times some polynomial function of 1 over the smallest weight. This would lead
to the
same type of application as above. In fact,
 for the special case $n=2$
 a two sided, sharper version
of  (\ref{refAMGM}) appears in Lemma 2.1 of \cite{Al}. It is
not clear to me how to extend this sharper version to $n > 2$.
\end{remark}

\begin{remark} When $n=2$, inequality (\ref{bonhold}) reduces to
\begin{equation}\label{bonhold2}
\|fg\|_1
 \le  \|f\|_p\|g\|_q \left(1 - \frac1{pq}
\left\|\frac{f^{p/2}}{\|f\|_p^{p/2}}-\frac{g^{q/2}}{\|g\|_q^{q/2}}\right\|_2^2\right),
\end{equation}
where $p$ and $q$ are conjugate exponents, $0\le f\in L^p $, $0\le g\in L^q$, $\|f\|_p >0$, and  $\|g\|_q > 0$.
In addition to providing a lower bound, with
$1/ \min\{p,q\}$ instead of $1/ (pq)$,
 Lemma 2.1 of \cite{Al}
yields a slightly better upper bound: $1/(pq) = 1/(p + q)$  can be replaced by $1/\max\{p,q\}$. But we note that
 (\ref{bonhold2}) suffices, via the standard argument, to give
a refinement of the triangle inequality for $L^p$ spaces, $1 < p <\infty$,
 which in
 turn leads to a fairly straightforward proof of uniform convexity
in the real valued case (arguing as in  \cite{Al}). So the selfimproving properties of the AM-GM inequality have repercussions beyond what one might expect.
\end{remark}

\begin{remark} Note that $f_i^{p_i/2}/\|f_i\|_{p_i}^{p_i/2}$ is just
a unit vector in $L^2$.
The strategy underlying inequality (\ref{bonhold}) is to normalize all
functions and map them into $L^2$, which becomes the common measuring
ground where dispersion around the mean is determined. When $n = 2$,
the correction term reduces to a function of the angular distance between
$f^{p/2}$ and $g^{q/2}$.
\end{remark}

\end{document}